\title{A graph-theoretic remark on Stieltjes moment sequences}
\author{Bryan Park}
\date{\today}
\begin{document}

\maketitle

\newtheorem{theorem}{Theorem}
\newtheorem{problem}{Problem}
\newtheorem{lemma}{Lemma}
\newtheorem{definition}{Definition}
\newtheorem{example}{Example}
\newtheorem{observation}{Observation}
\newtheorem*{answer}{Answer}
\newtheorem{corollary}{Corollary}
\newtheorem{proposition}{Proposition}
\newtheorem*{remark}{Remark}

\newcommand{\Var}{\operatorname{Var}}
\newcommand{\C}{\mathcal{C}}
\newcommand{\E}{\mathcal{E}}
\newcommand{\N}{\mathbb{N}}
\newcommand{\G}{\mathcal{G}}
\newcommand{\F}{\mathcal{F}}
\newcommand{\Z}{\mathbb{Z}}
\newcommand{\FX}{\mathcal{F}^\mathbf{X}}
\renewcommand{\H}{\mathcal{H}}
\renewcommand{\L}{\mathcal{L}}
\newcommand{\T}{\mathsf{T}}
\renewcommand{\P}{\mathcal{P}}
\newcommand{\B}{\mathcal{B}}
\newcommand{\A}{\mathcal{A}}
\newcommand{\V}{\mathcal{V}}
\newcommand{\D}{\mathcal{D}}
\newcommand{\R}{\mathbb{R}}

\makeatletter
\newcommand{\Mod}[1]{\ (\mathrm{mod}\ #1)}

\begin{abstract}
    For any integer $k\geq 1,$ define $\L_k: \R^\N\to \R^\N$ by $(a_n)_{n\in\N}\mapsto (a'_n)_{n\in\N}$ where $a'_n=\det(a_{n+i+j})_{i,j=0}^{k-1}$. Previously, Zhu showed that $\L_k$ preserves the Stieltjes moment (SM) property of sequences (Proc.\ Am.\ Math.\ Soc., 2019). The proof used the characterization of SM sequences in terms of positive semidefinite Hankel matrices. In this note, we give another proof by viewing SM sequences as weighted enumerations of closed walks on $\N$. Our proof is essentially a double-counting argument that views a $k$-tuple of non-crossing Dyck paths as a single closed walk on some bipartite subgraph of $\N^k.$
\end{abstract}

\section{Introduction}
Let $\N:=\{0,1,2,\dots\}$ and $\mathbf{a}=(a_n)_{n\in\N}$ be a sequence of real numbers. Then, $\mathbf{a}$ is a \textit{Stieltjes moment} (SM) sequence if $a_0\geq 0$ and there exists a nonnegative Borel random variable $X$ such that $a_n=a_0\cdot \mathbb{E}[X^n]$ for each $n\in\N.$ In particular, we say that any SM sequence has the \textit{SM property}. In \cite{Zhu}, Zhu showed that a certain nonlinear operator preserves the SM property of sequences. Namely, fix an integer $k\geq 1$ and define $\L_k:\R^\N\to\R^\N$ by $(a_n)_{n\in\N}\mapsto (a'_n)_{n\in\N}$ where $a'_n=\det(a_{n+i+j})_{i,j=0}^{k-1}.$ Then, we have the following result.
\begin{theorem}[Zhu, 2019]
    Fix an integer $k\geq 1$. If $\mathbf{a}$ is an SM sequence, then $\L_k(\mathbf{a})$ is also an SM sequence.
\end{theorem}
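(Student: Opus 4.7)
The plan is to combine a combinatorial realization of Stieltjes moment sequences as weighted enumerations of walks on a bipartite graph with the Lindström--Gessel--Viennot (LGV) lemma, then to reinterpret the resulting non-crossing path ensembles as single closed walks on a bipartite subgraph of $\N^k$. First, I would invoke a Flajolet--Viennot realization: assuming $a_0=1$, write $a_n=\sum_W w(W)$ where $W$ ranges over closed walks of length $2n$ starting and ending at a distinguished vertex on a bipartite graph $G$ on $\N$ with nonnegative edge weights. Concretely, from a Cholesky decomposition $A=B^{\T}B$ of the nonnegative Jacobi matrix $A$ associated to the sequence, $B$ serves as the biadjacency of $G$, and $a_n=\langle e_0,(B^{\T}B)^n e_0\rangle$ enumerates closed walks of length $2n$ on $G$.

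To handle the Hankel determinant $a'_n=\det(a_{n+i+j})_{i,j=0}^{k-1}$, I would pass to a time-indexed acyclic lift of $G$ (with vertex set $V(G)\times\Z$ and edges for one-step time-forward moves across $G$) and apply LGV with sources $A_i=(0,-2i)$ and sinks $B_j=(0,2n+2j)$, so that $a_{n+i+j}$ equals the total weight of paths $A_i\to B_j$. A parity and intermediate-value argument --- each difference $P_i(t)-P_j(t)$ changes by an even amount per step --- shows that for any non-identity permutation $\sigma$, every path tuple $(P_i\colon A_i\to B_{\sigma(i)})$ has two paths sharing a vertex. The standard sign-reversing involution then cancels these, leaving the weighted count of non-crossing path tuples $P_i\colon A_i\to B_i$.

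The crux is to reinterpret such a non-crossing tuple as a single closed walk on a bipartite subgraph of $\N^k$. The outer portions of each $P_i$ (time $[-2i,0]$ and $[2n,2n+2i]$) are forced by non-crossing and the extremal height bound to be straight ascents followed by straight descents, contributing a deterministic nonnegative weight $C$. The remaining synchronous motion on $[0,2n]$ is a closed walk of length $2n$ on the Weyl chamber $G^{(k)}\subset\N^k$ of strictly-ordered tuples $h_1<h_2<\cdots<h_k$, starting and ending at $(0,2,\ldots,2(k-1))$. The bipartition of $G$ induces a bipartition on $G^{(k)}$ (all walkers share a side of $G$ at each time), and edge weights stay nonnegative as products of those of $G$. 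The converse of the Flajolet--Viennot characterization applied on $G^{(k)}$ yields that this walk enumeration is SM, and then $a'_n=C\cdot b_n$ with $C\geq 0$ and $(b_n)$ SM shows that $\L_k(\mathbf{a})$ is SM.

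The main obstacle I foresee is this reinterpretation step: verifying that the LGV non-crossing enumeration matches exactly the closed walks on $G^{(k)}$ (without missing or adding configurations), and that the bipartite structure of $G^{(k)}$ inherited from $G$ genuinely fits within the scope of the converse Flajolet--Viennot theorem, so that one concludes SM and not merely the Hamburger moment property.
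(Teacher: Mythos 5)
Your architecture is essentially the paper's: realize the SM sequence as a weighted enumeration of Dyck paths (closed walks on the weighted path graph on $\N$), apply Lindstr\"om--Gessel--Viennot to write $a'_n=\det(a_{n+i+j})_{i,j=0}^{k-1}$ as a weighted count of non-crossing $k$-tuples, peel off the forced initial and final steps, and read the synchronized tuple of heights as a single closed walk on a bipartite ``chamber'' graph inside $\N^k$. Up to that point the plan is sound, with one caveat of rigor: the crossing/cancellation argument needs the linear order on heights, i.e.\ it needs your bipartite realization to actually be the path graph on $\N$ (your bidiagonal Cholesky factor of the tridiagonal Jacobi matrix does give this, but the argument would fail for a walk realization on a general bipartite $G$, so this should be said explicitly).

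The genuine gap is the step you yourself flag at the end: concluding that the closed-walk enumeration on $G^{(k)}\subset\N^k$ is an SM sequence. There is no ``converse Flajolet--Viennot theorem'' to cite for walks on an arbitrary locally finite bipartite graph; Flajolet's theorem and the Stieltjes continued-fraction characterization concern walks on the path graph $\N$, equivalently tridiagonal (Jacobi) operators, and the chamber graph is not of that form once $k\geq 2$. This is precisely what the paper's Lemma 1 supplies, and it is proved, not merely invoked: letting $A$ be the weighted adjacency operator of the component of the start vertex, one orthonormalizes the Krylov vectors $A^n e_1$ (a Lanczos/Gram--Schmidt step) to obtain a symmetric tridiagonal $T$ with $(A^{2n})_{0,0}=(T^{2n})_{0,0}$ for all $n$, and bipartiteness forces every diagonal entry of $T$ to vanish --- which is exactly what produces an S-fraction (Stieltjes) rather than merely a J-fraction (Hamburger), resolving the SM-versus-Hamburger worry you raise. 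Without this lemma, or an equivalent argument (e.g.\ tridiagonalization with zero diagonal over the Krylov subspace of the start vertex), your final sentence ``this walk enumeration is SM'' is an assertion rather than a deduction, and it is the heart of the proof; everything before it is a reformulation of the determinant, while this step is where positivity on $[0,\infty)$ actually enters.
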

\noindent
Since any SM sequence is nonnegative, it follows that $(\L_k)^t(\mathbf{a})$ is nonnegative for any $t\in \N.$ When $k=2,$ this gives the result that any SM sequence is \textit{infinitely log-convex}, which was originally shown in \cite{Wang}. 

To prove Theorem 1, Zhu used the characterization that $\mathbf{a}$ is an SM sequence if and only if both $\H(\mathbf{a})$ and $\H(\theta(\mathbf{a}))$ are positive semidefinite where $\H(\mathbf{a}):=(a_{i+j})_{i,j\geq 0}$ and $\theta:\R^\N\to\R^\N$ is the shift operator given by $(a_n)_{n\in\N}\mapsto (a_n)_{n=1}^\infty.$ Writing $\mathbf{a}':=\L_k(\mathbf{a})$, the key idea of Zhu's proof is noting that $\H(\mathbf{a}')$ is a principal submatrix of $C_k(\H(\mathbf{a}))$ where $C_k(A)$ is the $k$th compound matrix of $A.$ In other words, Zhu argued that the positive semidefiniteness of $\H(\mathbf{a})$ implies the positive semidefiniteness of $\C_k(\H(\mathbf{a}))$ and thus $\H(\mathbf{a}').$ We remark that the exact arguments were given in terms of finite leading principal submatrices.

In this note, we give a different proof of Theorem 1 by using a graph-theoretic characterization of SM sequences. Our proof is essentially a double-counting argument that views a $k$-tuple of non-crossing Dyck paths as a single closed walk on some bipartite graph in $\N^k.$

\section{A graph-theoretic characterization}
Let $P$ denote the nearest-neighbor graph on $\N$. Also, let $\mathbf{a}=(a_n)_{n\in\N}$ be \textit{path-enumerable} if $a_0\geq 0$ and there exists $w:E(P)\to \R$ such that for each $n\in \N$, $a_n$ equals $a_0$ times the weighted sum over all closed walks of length $2n$ on $(P,w)$ (that begin at $0$). Indeed, the weight of a closed walk is simply the product of weights of edges traversed (including multiplicity). By the continued fraction characterization\footnote{The original result is by Stieltjes. See page 5 and Theorem 2.2 in \cite{Sokal} for a nice summary of results and references.} of SM sequences \cite{Sokal} and Flajolet's continued fraction theorem \cite{Flajolet}, it follows that $\mathbf{a}$ is an SM sequence if and only if it is path-enumerable. We remark that path-enumerability is usually discussed in terms of \textit{Dyck paths}, which are simply ``unraveled" versions of closed walks on $P.$ We will give a formal definition later, as Dyck paths will be necessary for a graph-theoretic interpretation of $\L_k$. For now, however, we choose the closed walk version due to the following observation.

\begin{lemma}
    Let $G=(V,E)$ be any undirected, locally-finite, bipartite graph. Fix any $v\in V$ and $w:E\to \R$. Also, for each $n\in\N,$ let $a_n$ denote the weighted sum over all closed walks of length $2n$ on $(G,w)$ (that begin at $v$). Then, $(a_n)_{n\in\N}$ is path-enumerable.
\end{lemma}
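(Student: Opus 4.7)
My plan is to exhibit $(a_n)$ as a Stieltjes moment sequence via a spectral argument on the weighted adjacency matrix of $G$, then invoke the cited Stieltjes--Flajolet equivalence between SM and path-enumerability.

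First I would dispatch the case of finite $G$. Let $A$ be the symmetric real weighted adjacency matrix of $(G,w)$, so that $a_n = \langle e_v, A^{2n} e_v\rangle = \langle e_v, (A^2)^n e_v\rangle$. Since $A$ is real symmetric, $A^2$ is positive semidefinite; writing its spectral decomposition as $A^2 = \sum_i \lambda_i u_i u_i^\top$ with $\lambda_i \geq 0$, the measure $\mu := \sum_i (u_i^\top e_v)^2\, \delta_{\lambda_i}$ is a nonnegative Borel measure on $[0,\infty)$ with $a_n = \int \lambda^n\, d\mu(\lambda)$. Hence $(a_n)$ is SM, and the cited equivalence yields path-enumerability.

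For locally-finite $G$, I would pass to finite subgraphs. Let $G_N$ denote the induced subgraph on the ball of radius $N$ around $v$, which is finite by local finiteness. Any closed walk of length $2n$ from $v$ is contained in $G_n$, so the sequence produced by $G_N$ agrees with $\mathbf{a}$ in its first $N+1$ entries. The SM property is equivalent to positive semidefiniteness of every finite leading principal submatrix of $\H(\mathbf{a})$ and $\H(\theta\mathbf{a})$; each such size-$k$ submatrix depends only on $a_0,\dots, a_{2k}$ and therefore coincides with the corresponding submatrix for $G_{2k}$, which is PSD by the finite case. Hence $(a_n)$ is SM in the locally-finite case as well, completing the proof.

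The argument is essentially automatic once one observes that $a_n$ is a diagonal entry of $(A^2)^n$, so I anticipate no serious obstacle. The truncation step is routine since closed walks of bounded length are confined to a finite induced subgraph of $G$. I do note that bipartiteness plays no explicit role in this spectral route; it will presumably become essential later when applying this lemma to the natural graph on $\N^k$ encoding $k$-tuples of non-crossing Dyck paths, which is bipartite by parity of the coordinate sum.
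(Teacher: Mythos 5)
Your proposal is correct, but it takes a genuinely different route from the paper. You establish that $(a_n)$ is an SM sequence directly --- via the spectral theorem applied to $A^2$ in the finite case, and via the ball-truncation argument combined with the positive-semidefinite Hankel criterion for the Stieltjes moment problem in the locally finite case --- and then invoke the Stieltjes--Flajolet equivalence between the SM property and path-enumerability stated in Section 2. The paper instead stays inside the walk-enumeration framework: it runs the Lanczos algorithm on the weighted adjacency matrix of the component of $v$, i.e.\ Gram--Schmidt on the Krylov vectors $A^n e_v$, producing a symmetric tridiagonal $T$ with $(A^{2n})_{0,0}=(T^{2n})_{0,0}$, which is then read off as the adjacency matrix of a weighted path; bipartiteness is used precisely there to force the diagonal of $T$ to vanish. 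So your closing remark is slightly off: bipartiteness is used inside the paper's proof of this very lemma, not only later --- although, as you correctly observe, it is superfluous for your spectral route, which in fact proves the statement for arbitrary locally finite graphs. What each approach buys: yours is shorter, exhibits the representing measure explicitly as a spectral measure, and shows the bipartiteness hypothesis can be dropped; the paper's handles the infinite locally finite case in one stroke without truncation, produces the path weights explicitly (the Lanczos coefficients), and avoids re-invoking the Hankel positive-semidefiniteness machinery of the moment problem, in keeping with the note's aim of complementing Zhu's argument rather than reusing it (your lemma proof, by contrast, leans on both the Hankel criterion and the nontrivial direction of the SM/path-enumerability equivalence). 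Two harmless quibbles: the $k\times k$ leading blocks of $\H(\mathbf{a})$ and $\H(\theta(\mathbf{a}))$ involve only $a_0,\dots,a_{2k-1}$ (your bound $a_{2k}$ overshoots slightly), and it is worth noting that $a_0=1$ here, so the degenerate case $a_0=0$ in the definition of an SM sequence never arises.
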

\begin{proof}
    Let $A$ denote the adjacency matrix of the connected component of $(G,w)$ including $v$ such that the first row (and column) represents $v\in V$. Then, it suffices to find a (possibly finite) symmetric tridiagonal matrix $T$ with diagonal elements zero such that $(A^{2n})_{0,0}=(T^{2n})_{0,0}$ for each $n\in \N$. This is because we can take $T$ to be the adjacency matrix of our desired weighted version of $P.$ Note that the matrix multiplications are well-defined since both $A$ and $T$ are row-column finite. Below,\footnote{This is in fact the Lanczos algorithm from numerical analysis.} we describe how to obtain $T$ from $A$.

We begin with some definitions. Let $V$ be the Hilbert space of square-summable real sequences where the inner product is given by the sum of componentwise products. Let $e_1:=(1,0,0,\dots)\in V$ and  $p_n:=A^ne_1$ for each $n\in \N$. Then, $p_n$ has finitely many non-zero entries which correspond to vertices reachable from $v$ after $n$ steps. Hence, $p_n\in V$ for each $n\in\N.$ Finally, take $r\in\N\cup\infty$ such that $$r:=\sup\{\ell\in\N\mid \{p_n\}_{n=0}^\ell\text{ is linearly independent}\}.$$ Let $S_r=\{p_n\}_{n=0}^r$ and $\{q_n\}_{n=0}^r$ be the set of vectors obtained by orthonormalizing $S_r$ using the Gram-Schmidt process. We are now ready to derive $T.$

Let $Q$ be the matrix with columns given by $\{q_n\}_{n=0}^r$. For any $n\in\{0,1,\dots, r\},$ the Gram-Schmidt process ensures $\text{span}(p_0,\dots, p_n)=\text{span}(q_0,\dots,q_n).$ This gives $Aq_n\in\text{span}\{q_0,q_1,\dots,q_{\min\{n+1,r\}}\}$ and we conclude that $AQ=QT$ for some upper-Hessenberg matrix $T.$ Since $Q^\mathsf{T}Q=I_{r+1}$, we see that $T=Q^\mathsf{T}AQ.$ Also, since $A$ is symmetric, $T$ is symmetric and thus tridiagonal. Finally, $T^{n}=Q^\mathsf{T}A^{n}Q$ for each $n\in\N.$ Since $q_0=e_1,$ it follows that $Qe_1=e_1$ and thus $(A^{n})_{0,0}=(T^{n})_{0,0}$ for each $n\in\N$ as desired.

To conclude, it remains to show that the diagonal elements of $T$ are zero. Since $G$ is bipartite, we must have $(A^{2n+1})_{0,0}=(T^{2n+1})_{0,0}=0$ for any $n\in\N.$ Using proof by contradiction, it follows that all diagonal elements of $T$ must equal zero. This concludes our proof.
\end{proof}

We remark that Lemma 1 can also be derived from Theorem 4.3 of \cite{Price} by using the equivalence of SM sequences and path-enumerable sequences.
Lemma 1 is useful as it allows us to consider any bipartite graph, not just the path graph $P,$ in order to show path-enumerability. This is exactly the idea behind our proof of Theorem 1. Before proceeding to the main proof, we need a graph-theoretic interpretation of $\L_k$ which we discuss in the following section.

\section{Dyck paths and $\L_k$}
A \textit{Dyck path of length $2n$} is a sequence of vertices $(v_0,\dots,v_{2n})$ in $\Z\times \N$ such that $v_0=(m,0)=v_{2n}-(2n,0)$ for some $m\in\Z$ and $v_i-v_{i-1}\in\{(1,1),(1,-1)\}$ for each $i\in [2n].$ We say that $(1,1)$ is an \textit{upstep} and $(1,-1)$ is a \textit{downstep}. Given some real sequence $\mathbf{w}=(w_n)_{n=1}^\infty,$ let $w_n$ denote the weight of an upstep ending at height $n$ or a downstep beginning at height $n.$ The weight of a Dyck path $\P$ according to $\mathbf{w},$ which we denote $f_\mathbf{w}(\P),$ is the product of weights of all steps in $\P$ (with multiplicity). Also, if $\P$ is a Dyck path of length $0$, we define $f_\mathbf{w}(\P):=1$. 

By the simple bijection between Dyck paths and closed walks on $P,$ we see that $\mathbf{a}:=(a_n)_{n\in\N}$ is an SM sequence if and only if $a_0\geq 0$ and there exists $\mathbf{w}:=(w_n)_{n=1}^\infty$ such that 
$$a_n=a_0\sum_{\P\in\D_n}f_\mathbf{w}(\P)$$
for each $n\in\N$ where $\D_n$ is the set of all Dyck paths of length $2n$ that begin at $(0,0).$ This perspective is useful as a natural interpretation of $\L_k(\mathbf{a})=(a'_n)_{n\in\N}$ is available by a standard application\footnote{An explanation of the unweighted case corresponding to Catalan numbers is given in \cite{Benjamin}.} of the Lindstr\"{o}m-Gessel-Viennot lemma \cite{Benjamin}. Namely, for each $n\in\N,$ we have
\begin{align}
    a'_n=a_0^k\sum_{\mathbf{P}\in \A_n}\prod_{j=1}^k f_\mathbf{w}(\P_j)\label{eq:useful}
\end{align}
where $\A_n$ is the set of all $k$-tuples of non-intersecting Dyck paths $(\P_1,\P_2,\dots,\P_k)$ such that $\P_j$ begins at $(-2(j-1),0)$ and has length $2n+4(j-1)$ for each $j\in [k].$ We are now ready to prove Theorem 1.
\section{Main Result}
Recalling that the SM property is equivalent to path-enumerability, we show the following result.
\begin{theorem}
    Fix an integer $k\geq 1$. If $\mathbf{a}$ is path-enumerable, then $\L_k(\mathbf{a})$ is also path-enumerable.
\end{theorem}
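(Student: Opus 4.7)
The plan is to rewrite equation \eqref{eq:useful} as a weighted count of closed walks on a bipartite subgraph of $\N^k$ and then invoke Lemma 1 to transfer the result back to the path graph $P$.

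First, I would observe that any $(\P_1,\dots,\P_k)\in \A_n$ has forced initial and final segments. Arguing by induction on $j$, the non-intersection constraint together with the starting point $(-2(j-1),0)$ forces the first $2(j-1)$ steps of $\P_j$ to be upsteps (so $\P_j$ lies at height $2(j-1)$ at time $0$), and symmetrically the last $2(j-1)$ steps must be downsteps. Factoring out the corresponding weights gives
\[
a'_n \;=\; a_0^k\, c_k \sum_{(\P'_1,\dots,\P'_k)\in\A'_n} \prod_{j=1}^k f_\mathbf{w}(\P'_j),
\]
where $c_k := \prod_{j=2}^k (w_1 w_2 \cdots w_{2(j-1)})^2$ and $\A'_n$ is the set of $k$-tuples of $\pm 1$ walks on $\N$, each of length $2n$, with $\P'_j$ starting and ending at height $2(j-1)$ and $\P'_1(t) < \cdots < \P'_k(t)$ for every $t \in \{0,\dots,2n\}$.

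Next, I would combine each such tuple into a single walk on a weighted bipartite graph $G \subset \N^k$. Take $V(G) := \{(h_1,\dots,h_k)\in\N^k : h_1 < \cdots < h_k,\; h_i \equiv h_j \Mod{2}\text{ for all }i,j\}$, with an edge from $(h_1,\dots,h_k)$ to $(h_1+\epsilon_1,\dots,h_k+\epsilon_k)$ for each $(\epsilon_1,\dots,\epsilon_k)\in\{\pm 1\}^k$ whose endpoint lies in $V(G)$, weighted by $\prod_j w_{\max(h_j,\,h_j+\epsilon_j)}$. This graph is locally finite (degree at most $2^k$) and bipartite via the parity of $h_1$, which flips at every step. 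The map $(\P'_1,\dots,\P'_k) \mapsto \bigl((\P'_1(t),\dots,\P'_k(t))\bigr)_{t=0}^{2n}$ is then a weight-preserving bijection between $\A'_n$ and closed walks of length $2n$ on $(G,w)$ based at $v := (0,2,4,\dots,2(k-1))$. Applying Lemma 1 yields weights $\mathbf{w}'$ on $P$ whose weighted closed walk counts match those on $(G,w)$, so
\[
a'_n \;=\; a_0^k\, c_k \sum_{\P \in \D_n} f_{\mathbf{w}'}(\P) \;=\; a'_0 \sum_{\P \in \D_n} f_{\mathbf{w}'}(\P),
\]
which exhibits $\L_k(\mathbf{a})$ as path-enumerable with $a'_0 = a_0^k\, c_k \geq 0$.

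The main obstacle will be carrying out the forced-step argument cleanly and verifying the weight-preserving bijection; once those are nailed down, Lemma 1 handles the reduction to $P$ automatically. A minor subtlety is the degenerate case $a'_0 = 0$, which happens precisely when $a_0 = 0$ or $w_i = 0$ for some $i \leq 2(k-1)$: in either case every summand of \eqref{eq:useful} vanishes, so $a'_n = 0$ for all $n$ and path-enumerability holds trivially.
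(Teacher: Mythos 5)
Your proposal is correct and follows essentially the same route as the paper: factor out the forced initial/final steps, encode the $k$-tuple of constrained paths as a single closed walk on a bipartite subgraph of $\N^k$ with edge weights $\prod_j w_{\max(u_j,v_j)}$, and invoke Lemma 1. The only difference is cosmetic: you keep the paths at their original heights (strict inequalities, walks based at $(0,2,\dots,2(k-1))$, constant $c_k$ pulled out explicitly), whereas the paper shifts each path down by $2(j-1)$, uses weak inequalities with base point $\mathbf{0}$, shifted weight sequences $\theta^{2j-2}(\mathbf{w})$, and absorbs the constant into $a'_0$.
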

\begin{proof} Since $\mathbf{a}$ is path-enumerable, assume it is given by weights $\mathbf{w}=(w_n)_{n=1}^\infty.$ We consider $\L_k(\mathbf{a})=(a'_n)_{n\in\N}$ by beginning with expression  \eqref{eq:useful}. Since the paths in the summation are non-intersecting, we have the following conditions on each Dyck path: First, for each $j\in [k]$, the initial $2(j-1)$ steps of $\P_j$ must be upsteps and the last $2(j-1)$ steps must be downsteps. Also, the middle $2n$ steps cannot go below height $2(j-1).$ Recall that $\theta$ is the shift operator. Then, we see that $a'_n=a'_0b_n$ for each $n\in \N$ where
$$b_n=\sum_{\mathbf{P}\in \B_n}\prod_{j=1}^k f_{\theta^{2j-2}(\mathbf{w})}(\P_j)$$
and $\B_n$ is the set of all $(\P_1,\P_2,\dots, \P_k)\in (\D_n)^k$ such that $\P_{j}$ lies below $\P_{j+1}$ for each $j\in[k-1]$. Intuitively, $\B_n$ is given by dropping the middle $2n$ steps of the original $\P_j$ to the ground for each $j\in [k]$.

To show that $\L_k(\mathbf{a})=(a'_n)_{n\in\N}$ is path-enumerable, it suffices to show that $(b_n)_{n\in\N}$ is path-enumerable since $a'_0\geq 0$ is constant. Fix any $n\in\N$. We will interpret $b_n$ as the weighted sum over length-$2n$ closed walks on some bipartite graph. Namely, let $G=(V,E)$ where 
\begin{align*}
    V &=\{(x_1,x_2,\dots,x_k)\in\N^k\mid 0\leq x_1\leq \cdots\leq x_k \text{ and }x_1\equiv\cdots \equiv x_k\Mod{2}\},\\ 
    E &= \{\{u,v\}\mid u,v\in V\text{ and }u-v\in\{-1,1\}^k\}.
\end{align*}
It is clear that $G$ is bipartite. Let $\C_n$ denote the set of all length-$2n$ closed walks on $\G$ that begin at $\mathbf{0}\in V.$ We first show that there is a bijection between $\B_n$ and $\C_n.$ 

Define $\varphi: \B_n\to \C_n$ by $(\P_1,\dots,\P_k)\to (v_0,\dots, v_{2n})$ where $v_{ij}$ (the $j$th component of $v_i$) is the height of the $i$th vertex in $\P_j$ for $i\in\{0,1,\dots, 2n\}$ and $j\in [k].$ To check that $\varphi$ is well-defined, first note that $v_0=v_{2n}=\mathbf{0}.$ Moreover, we indeed have $v_i\in V$ for each $i\in\{0,1,\dots, 2n\}.$ Finally, $\{v_{i-1},v_i\}\in E$ for each $i\in [2n]$ since only upsteps and downsteps are possible in a Dyck path. Next, it is clear that $\varphi$ is injective. To show surjectivity, take any $(v_0,\dots,v_{2n})\in\C_n.$ First, for any $j\in [k],$ note that $(v_{0j},\dots, v_{(2n)j})$ describes the heights traversed by some Dyck path which we denote $\P_j\in \D_n.$ Moreover, since $v_i\in V$ for any $i\in\{0,1,\dots, 2n\},$ it follows that $(\P_1,\dots, \P_k)\in\B_n.$ By construction, $\varphi((\P_1,\dots, \P_k))=(v_0,\dots, v_{2n})$ and we have our desired surjectivity. Thus, $\varphi$ is a bijection.

To conclude, we find $w':E\to\R$ such that $b_n$ is the weighted sum over length-$2n$ closed walks on $(G,w')$ that begin at $\mathbf{0}.$ To do so, it suffices for $w'$ to satisfy
$$\prod_{j=1}^k f_{\theta^{2j-2}(\mathbf{w})}(\P_j)=\prod_{i=1}^{2n}w'(\{v_{i-1},v_i\})$$
for any $(\P_1,\dots, \P_k)\in\B_n$
where $(v_0,\dots,v_{2n})=\varphi((\P_1,\dots,\P_k)).$ Since
\begin{align*}
    f_{\theta^{2j-2}(\mathbf{w})}(\P_j) &= \prod_{i=1}^{2n}w_{\max\{v_{(i-1)j},v_{ij}\}+2j-2}
\end{align*}
for each $j\in [k]$, it suffices for 
$$w'(\{v_{i-1},v_i\})=\prod_{j=1}^kw_{\max\{v_{(i-1)j},v_{ij}\}+2j-2}$$
to hold for each $i\in[2n].$ In other words, we can take $w'$ given by $\{u,v\}\mapsto \prod_{j=1}^k w_{\max\{u_j,v_j\}+2j-2}$ which is well-defined for undirected edges. Thus, we have constructed our desired weighted graph $(G,w')$ that enumerates $(b_n)_{n\in\N}.$ Using Lemma 1, $(b_n)_{n\in\N}$ is path-enumerable as desired and we conclude our proof.
\end{proof}

\end{document}